\documentclass[11pt]{amsart}

\usepackage[T1]{fontenc}
\usepackage{amsmath,amssymb,amsthm,mathtools}
\usepackage{microtype}
\usepackage[hidelinks]{hyperref}
\usepackage[margin=1in]{geometry}

\hypersetup{
  pdftitle={The Minimum Cardinality of a Dependent Finite Gabor System Is Four},
  pdfauthor={Xinan Dai, Wenhao Deng, Yingdong Shi, Tailin Wu, Yuchen Yang}
}

\title{The Minimum Cardinality of a Dependent Finite Gabor System Is Four}

\author{Xinan Dai}
\thanks{Xinan Dai is currently a Ph.D. student at Fudan University and a visiting
student at the AI for Scientific Simulation and Discovery Lab,
Westlake University.}
\address{College of Future Information and Technology\\
Fudan University, Shanghai, China}
\curraddr{Department of Artificial Intelligence\\
School of Engineering\\
Westlake University, Hangzhou, China}
\email{xndai23@m.fudan.edu.cn}

\author{Wenhao Deng}
\thanks{Wenhao Deng is a student at the University of Glasgow and is currently an intern at the AI for Scientific Simulation and Discovery Lab, Westlake University.}
\address{University of Glasgow, Glasgow, United Kingdom}
\curraddr{Department of Artificial Intelligence\\
School of Engineering\\
Westlake University, Hangzhou, China}
\email{dengwenhao@westlake.edu.cn}

\author{Yingdong Shi}
\address{School of Information Science and Technology, ShanghaiTech University, Shanghai, China}
\email{shiyd2023@shanghaitech.edu.cn}

\author{Tailin Wu}
\address{Department of Artificial Intelligence\\
School of Engineering\\
Westlake University, Hangzhou, China}
\email{wutailin@westlake.edu.cn}

\author{Yuchen Yang}
\address{Department of Artificial Intelligence\\
School of Engineering\\
Westlake University, Hangzhou, China}
\email{yangyuchen@westlake.edu.cn}

\date{August 8, 2026}
\subjclass[2020]{Primary 42C15; Secondary 42A38, 47A16}
\keywords{HRT conjecture, Gabor systems, time--frequency shifts, Zak transform,
dominated cocycles, Diophantine cohomology}

\newtheorem{theorem}{Theorem}[section]
\newtheorem{proposition}[theorem]{Proposition}
\newtheorem{lemma}[theorem]{Lemma}
\newtheorem{corollary}[theorem]{Corollary}
\theoremstyle{remark}
\newtheorem{remark}[theorem]{Remark}

\newcommand{\R}{\mathbb R}
\newcommand{\C}{\mathbb C}
\newcommand{\Z}{\mathbb Z}
\newcommand{\Q}{\mathbb Q}
\newcommand{\T}{\mathbb T}
\newcommand{\cS}{\mathcal S}
\newcommand{\cZ}{\mathcal Z_2}
\newcommand{\wind}{\operatorname{wind}}
\newcommand{\tr}{\operatorname{tr}}
\newcommand{\adj}{\operatorname{adj}}
\newcommand{\diag}{\operatorname{diag}}

\begin{document}

\begin{abstract}
Recent work produced a linearly dependent system of twelve time--frequency
shifts of a Schwartz function, disproving the HRT conjecture.  We show that
four shifts already suffice, and hence that four is the smallest possible
cardinality of a dependent finite Gabor system.  More precisely, set
\[
 \alpha=\frac13+10^{-12}\sqrt2,\qquad
 \beta =\frac13+10^{-12}\sqrt3.
\]
We construct a nonzero complex-valued function $f\in\cS(\R)$ and
$\lambda\ne0$ such that
\[
 \left(I+\frac12W(1,0)+\frac12W(0,1/2)\right)
 W(\alpha,\beta/2)f=\lambda f,
\]
where $W$ denotes the Weyl time--frequency shift.  Since every system of at
most three shifts of a nonzero $L^2(\R)$ function is linearly independent,
this gives the sharp cardinality threshold.

The construction uses the rank-two Zak bundle naturally associated with the
covolume-$1/2$ lattice generated by $(1,0)$ and $(0,1/2)$.  At the rational
translation $(1/3,1/3)$, the three-step return has a uniformly dominated
contracting line.  A finite outward-rounded interval certificate proves that
this line is topologically trivial.  A quantitative perturbation argument
carries the dominated line to the explicit algebraic translation above.  A
winding calculation and a Diophantine cohomological equation then flatten its
scalar multiplier, and inverse Zak folding produces the required Schwartz
function.
\end{abstract}

\maketitle

\section{Introduction}\label{sec:intro}

For $(a,b)\in\R^2$, let
\begin{equation}\label{eq:Weyl-def}
 (W(a,b)f)(t)=e^{2\pi i b(t-a/2)}f(t-a),\qquad f\in L^2(\R).
\end{equation}
The operators $W(a,b)$ are the Weyl form of the Schr\"odinger representation;
our phase convention gives
\begin{equation}\label{eq:weyl-law}
 W(a,b)W(a',b')
 =e^{\pi i(ba'-ab')}W(a+a',b+b').
\end{equation}
See, for example, \cite[Chapter~1]{Folland89} for the Weyl calculus and its
basic covariance properties.

Heil, Ramanathan, and Topiwala asked whether, for every nonzero
$f\in L^2(\R)$ and every finite set of distinct points
$\Lambda\subset\R^2$, the family
\[
 \{W(z)f:z\in\Lambda\}
\]
is linearly independent \cite[p.~2790]{HRT96}.  The question became known as
the HRT conjecture.  It is elementary for two points, and the original paper
proved the three-point case; see \cite[Theorem~1, pp.~2791--2792]{HRT96}.
Linnell subsequently proved independence whenever the finite configuration is
contained in a discrete subgroup of the time--frequency plane
\cite[Proposition~1.3, p.~3270]{Linnell99}.  A substantial literature then
established further cases by imposing geometric or analytic structure: among
others, special four-point configurations were treated by Demeter
\cite{Demeter10} and Demeter--Zaharescu \cite{DemeterZaharescu12}, while strong
decay or asymptotic hypotheses on the window were studied in
\cite{BenedettoBourouihiya15,BownikSpeegle13}.  For later developments and
different reduction principles, see \cite{Grochenig15,Heil06,Liu19,Okoudjou19}.

The global conjecture is now known to be false.  Faulhuber, Petersen,
van Velthoven, and Voigtlaender recently constructed a Schwartz function with
twelve linearly dependent time--frequency shifts \cite[Theorem~1.1]{FPVV26}.  Once
existence of dependence is known, a natural finite question remains:
\emph{how many shifts are actually needed?}  Define
\[
 N_*:=\min\bigl\{|\Lambda|:\ \exists\,0\ne f\in L^2(\R)
 \text{ with }\{W(z)f:z\in\Lambda\}\text{ dependent}\bigr\}.
\]
The three-point theorem gives $N_*\ge4$, while the twelve-point example gives
$N_*\le12$.  Our main result closes this gap.

\begin{theorem}[explicit four-shift dependence]\label{thm:main}
Set
\begin{equation}\label{eq:explicit-tau}
 \alpha=\frac13+10^{-12}\sqrt2,
 \qquad
 \beta=\frac13+10^{-12}\sqrt3.
\end{equation}
There exist a nonzero complex-valued $f\in\cS(\R)$ and a scalar
$\lambda\in\C^*$ such that
\begin{equation}\label{eq:main-eigen}
 \left(I+\frac12W(1,0)+\frac12W(0,1/2)\right)
 W(\alpha,\beta/2)f=\lambda f.
\end{equation}
\end{theorem}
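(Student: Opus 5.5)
The plan follows the route sketched in the abstract: pass to the Zak picture, reduce the eigenvalue equation \eqref{eq:main-eigen} to a transfer-cocycle problem on a rank-two bundle, find an invariant line with a flattened multiplier, and fold back.

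\textbf{Step 1: Zak reduction.} Let $\Gamma$ be the lattice generated by $(1,0)$ and $(0,1/2)$, which has covolume $1/2$. By \eqref{eq:weyl-law}, $W(1,0)$ and $W(0,1/2)$ commute up to the phase $e^{\pi i(1/2)}$. Hence they generate a Heisenberg-type group whose irreducible representation on the Zak fibers has rank two. I would use a vector-valued Zak transform $\cZ$ mapping $\cS(\R)$ isomorphically onto smooth sections of a rank-two bundle $E$ over the torus $\T^2$; the nontriviality of $E$ is encoded in the quasi-periodicity. Under $\cZ$:
\begin{itemize}
\item $A:=I+\frac12W(1,0)+\frac12W(0,1/2)$ becomes a fiberwise matrix multiplier $A(x)\in M_2(\C)$, built from a clock matrix and a shift matrix with trigonometric entries;
\item $W(\alpha,\beta/2)$ becomes a translation $x\mapsto x+\tau$ on $\T^2$, with $\tau$ determined by $(\alpha,\beta)$, composed with a fixed unitary twist.
\end{itemize}
So \eqref{eq:main-eigen} is equivalent to finding a nonzero smooth section $F$ with $M(x)F(x-\tau)=\lambda F(x)$, where $M(x)$ is a $2\times2$ cocycle over an irrational rotation. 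Since $\det A(x)$ may vanish, I would check that it does not on the relevant set, or handle degeneracy directly.

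\textbf{Step 2: dominated line at the rational point.} At $\tau_0=(1/3,1/3)$ the three-step product $M^{(3)}(x)=M(x)M(x-\tau_0)M(x-2\tau_0)$ is an explicit matrix function. I would show that it has a uniformly dominated splitting, meaning an eigenvalue gap uniform in $x$. This would be done by a finite interval-arithmetic cover of the torus: bound the entries of $M^{(3)}$ and verify a cone-contraction estimate on each box. This yields a continuous invariant contracting line field $L_0(x)$. The same certificate must also show that $L_0$ is topologically trivial as a line subbundle, i.e.\ has winding or Chern degree zero, by tracking a discriminant's winding along the generators.

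\textbf{Step 3: perturbation to the target translation.} Dominated splittings are robust. Since $|\tau-\tau_0|\sim10^{-12}$, a quantitative cone-field argument transfers the cone condition to the cocycle at $\tau$, with explicit margin, producing a smooth invariant line $L(x)$ homotopic to $L_0$. Triviality lets me pick a smooth nonvanishing section $v(x)\in L(x)$, compatible with the bundle twist. Then
$$M(x)v(x-\tau)=\mu(x)v(x)$$
for a smooth nonvanishing scalar $\mu$.

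\textbf{Step 4: flattening and folding.} The winding of $\mu$ around both torus cycles should be zero, by continuity from the rational case and the degree computation. So $\mu=e^{\phi}$ with $\phi$ smooth and periodic. I would then solve the cohomological equation
$$\phi(x)-c=g(x)-g(x-\tau).$$
The pair $(\sqrt2,\sqrt3)$ shifted by $1/3$ is badly approximable in the simultaneous sense, since $1,\sqrt2,\sqrt3$ are independent algebraic numbers (Schmidt's subspace theorem or Liouville-type bounds). Dividing Fourier coefficients is then legitimate for smooth $\phi$ and yields smooth $g$. Setting $F=e^{-g}v$ and $\lambda=e^{c}$ gives the eigen-section, and $f=\cZ^{-1}F\in\cS(\R)$ is nonzero.

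\textbf{Main obstacle.} The hardest step is Step 2 together with the quantitative margin in Step 3. The domination and triviality must hold uniformly, and the gap must exceed the perturbation size explicitly. This requires a rigorous computer-assisted certificate, which I would set up by subdividing $\T^2$ adaptively and using outward rounding.
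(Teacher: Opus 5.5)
Your architecture matches the paper's: rank-two Zak bundle, a dominated contracting line for the rational three-step return at $(1/3,1/3)$, quantitative continuation, Diophantine flattening, and inverse Zak. However, the step carrying the topological content fails as you describe it.

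\textbf{The gap: certifying that $L_0$ is trivial.} You propose to show this ``by tracking a discriminant's winding along the generators.'' The discriminant $(\tr M^{(3)})^2-4\det M^{(3)}$, like any eigenvalue datum, is a nonvanishing scalar function. Its windings give a class in $H^1(\T^2;\Z)$, whereas triviality of $L_0$ is decided by $c_1(L_0)\in H^2(\T^2;\Z)\cong\Z$, which eigenvalue data cannot see. For example, if $\ell:\T^2\to\C P^1$ has degree one and $\Pi_\ell$ is orthogonal projection onto $\ell$, then $\Pi_\ell+2(I-\Pi_\ell)$ has constant discriminant $1$ but a nontrivial $1$-eigenline.

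This is not a technicality here. Since $\det S_1=-e^{2\pi i\omega}$ and $\det S_2=-1$, one gets $c_1(E)=\pm1$. So the two lines of any continuous splitting of $E$ cannot both be trivial, and which one is trivial is genuine information. Triviality of the line you flatten is also forced: any nonzero solution of \eqref{eq:cocycle-eigen} is nowhere zero, because its zero set is closed and invariant under a minimal translation, and so it trivializes its line.

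\textbf{The missing idea: compare $L_0$ with an explicitly trivial reference line.} The paper takes the explicit nowhere-vanishing section $h_0=L_{0,1}\chi$ of $E$ and sets $h=\adj(P)h_0$ and $n=(-\overline{h_2},\overline{h_1})^T$. It then certifies by outward-rounded interval arithmetic that the projective action of $\adj P$ maps the graph disk $|\xi|\le1/4$ over $H=\C h$ strictly into itself with derivative $<1$ (Proposition~\ref{prop:certificate}). Hence $L_0$ is a graph over $H$, and projection along $H^\perp$ identifies $L_0$ with the trivial line $H$. In your language: center the cone field on a globally defined trivial line, not on boxwise local data. That same global certificate, evaluated with the fixed frame at source $z-3\Delta$ and target $z$, is exactly what the continuation in your Step~3 perturbs.

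\textbf{Smaller points.}
\begin{itemize}
\item To close $M(x)M(x-\tau_0)M(x-2\tau_0)$ into an endomorphism of one fibre, you must include the sewing identification $E_{z-(1,1)}\cong E_z$. By \eqref{eq:holonomy} this is $vX_0Z_0$, which does not commute with $A$. Without it you certify the wrong matrix; the paper's negative control shows the graph inequalities then fail.
\item Domination at $\theta$ needs no computer: $|\tr P|^2=27/4>39/8\ge4|\det P|$. Also $|\det A|\ge1/2$ everywhere, which settles your degeneracy worry.
\item The multiplier winding is a separate obstruction from $c_1(L_0)$. It comes from $3\wind(q_\theta)=\wind(\lambda_s^{\mathrm{full}})=\wind(\det R)-\wind(\tr R)=(0,0)$, the small eigenvalue being $\det R/\tr R$ times a null-homotopic factor.
\item With your normalization $\phi-c=g(x)-g(x-\tau)$, the flattened section is $F=e^{g}v$, not $e^{-g}v$.
\item You need only a polynomial small-divisor bound. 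The norm argument in $\Q(\sqrt2,\sqrt3)$ gives exponent $3$. ``Badly approximable'' is neither needed nor what that argument provides.
\end{itemize}
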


By \eqref{eq:weyl-law}, equation \eqref{eq:main-eigen} is the nontrivial
four-term relation
\begin{align}\label{eq:four-relation}
 -\lambda f
 &+W(\alpha,\beta/2)f
 +\frac12e^{-\pi i\beta/2}W(\alpha+1,\beta/2)f \notag\\
 &+\frac12e^{\pi i\alpha/2}W(\alpha,\beta/2+1/2)f=0.
\end{align}
All four coefficients are nonzero and the four phase-space points are
distinct.  Thus Theorem~\ref{thm:main}, together with three-point
independence, gives the exact boundary.

\begin{corollary}\label{cor:boundary}
The minimum cardinality of a dependent finite Gabor system generated by a
nonzero $L^2(\R)$ function is
\[
 \boxed{N_*=4}.
\]
The same minimum is obtained if the window is required to belong to
$\cS(\R)$.
\end{corollary}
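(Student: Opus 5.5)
The corollary follows from two inequalities; all the substance sits in earlier results.

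For the lower bound $N_*\ge4$, I will invoke the three-point theorem of Heil, Ramanathan, and Topiwala \cite[Theorem~1]{HRT96}. It states that for every nonzero $f\in L^2(\R)$ and any three distinct points $z_1,z_2,z_3\in\R^2$, the vectors $W(z_j)f$ are linearly independent. Smaller configurations are covered by restriction, since any subfamily of an independent family is independent. The one-point case is immediate because $W(z)$ is unitary and $f\ne0$. So no $\Lambda$ with $|\Lambda|\le3$ gives a dependent system. The only bookkeeping point is our phase convention: different conventions for time--frequency shifts differ by unimodular scalars, and linear independence is unaffected by rescaling each vector by a nonzero scalar. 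Because this bound holds for all nonzero $L^2$ windows, it holds a fortiori for Schwartz windows.

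For the upper bound $N_*\le4$, I will take $f\in\cS(\R)\setminus\{0\}$ and $\lambda\ne0$ from Theorem~\ref{thm:main}. Expanding \eqref{eq:main-eigen} with the composition law \eqref{eq:weyl-law} gives the relation \eqref{eq:four-relation}. Its four points are
\[
(0,0),\quad(\alpha,\beta/2),\quad(\alpha+1,\beta/2),\quad(\alpha,\beta/2+1/2).
\]
These are pairwise distinct: $\alpha\ne0$ and $\beta\ne0$ separate the origin from the other three, and the last three differ pairwise by $(1,0)$, $(0,1/2)$, or $(1,-1/2)$. The coefficients $-\lambda$, $1$, $\tfrac12e^{-\pi i\beta/2}$, $\tfrac12e^{\pi i\alpha/2}$ are all nonzero. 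Hence $\{W(z)f:z\in\Lambda\}$ with $|\Lambda|=4$ is dependent, and this $f$ lies in $\cS(\R)\subset L^2(\R)$.

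Combining the two bounds gives $N_*=4$, and the same minimum holds when the window is required to be Schwartz. The step most likely to hide an error is the use of \eqref{eq:weyl-law} to produce the exact phases in \eqref{eq:four-relation}. Even there, only the nonvanishing of the coefficients matters, and that holds automatically because every phase factor is unimodular. The genuine difficulty, the existence claim in Theorem~\ref{thm:main}, is assumed here.
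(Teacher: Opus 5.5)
Your proposal is correct and follows the paper's proof. The Heil--Ramanathan--Topiwala three-point theorem gives $N_*\ge4$ for every nonzero $L^2(\R)$ window, and hence for Schwartz windows. The four-term relation \eqref{eq:four-relation} from Theorem~\ref{thm:main}, with its four distinct points and nonzero coefficients, gives $N_*\le4$ with a window in $\cS(\R)$; your extra bookkeeping (subfamilies, phase conventions, the explicit distinctness check) only spells out details the paper states in its introduction.
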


The complex nature of the example matters.  Guan and Okoudjou have proved
four-point independence for every real-valued $L^2(\R)$ window
\cite[Corollary~1]{GuanOkoudjou26}, so the function in Theorem~\ref{thm:main} cannot be
chosen real-valued.  The configuration also lies outside the large-covolume
regime recently certified by Oussa \cite[Theorem~2.1]{Oussa26}: after translating one
point to the origin, the two lattice vectors are $(1,0)$ and $(0,1/2)$, whose
symplectic covolume is $1/2$, whereas the theorem in \cite{Oussa26} assumes
covolume strictly larger than one in the corresponding irrational case.

The proof is guided by this covolume-$1/2$ geometry.  Folding the scalar Zak
transform along the second frequency coordinate produces a rank-two vector
bundle.  In that bundle, the three-term operator
\[
 I+\frac12W(1,0)+\frac12W(0,1/2)
\]
becomes an everywhere invertible $2\times2$ matrix field.  Thus dependence is
not created by a zero of a scalar trigonometric polynomial.  Instead, the
relevant object is an invariant complex line for a matrix cocycle.

There is a useful tension between the rational and irrational parameters.
At the rational translation
\[
 \theta=(1/3,1/3),
\]
the base dynamics closes after three steps, which makes the return matrix
explicit enough to analyze.  Linnell's theorem prevents this rational model
itself from producing a counterexample, since the resulting configuration is
lattice-contained.  The rational model nevertheless supplies the geometry:
its return has a dominated, topologically trivial contracting line.  Moving
to the nearby algebraic translation \eqref{eq:explicit-tau} preserves this
line while breaking the lattice obstruction.  The arithmetic of
$\Q(\sqrt2,\sqrt3)$ then provides precisely the polynomial small-divisor bound
needed to solve a smooth torus cohomology equation.  In short, the rational
parameter supplies the invariant geometry and the irrational perturbation
supplies the cohomological flexibility.

The paper is organized accordingly.  Section~\ref{sec:zak} gives the rank-two
Zak reduction.  Section~\ref{sec:return} analyzes the rational return and
states the finite certificate used to trivialize its contracting line.
Section~\ref{sec:perturb} makes the continuation quantitative and proves that
the concrete parameter \eqref{eq:explicit-tau} is admissible.
Section~\ref{sec:cohomology} carries out the winding and Diophantine
cohomology argument, and Section~\ref{sec:inverse} unfolds the resulting
section back to a Schwartz function.  The appendix records only the numerical
inequalities that enter the proof.

\section{The rank-two Zak reduction}\label{sec:zak}

We use the scalar Zak transform
\begin{equation}\label{eq:zak}
 (Zf)(x,\nu)=\sum_{k\in\Z}f(x-k)e^{2\pi i k\nu}.
\end{equation}
For standard facts about the Zak transform and its unitary extension, see
\cite[Chapter~8, pp.~147--174]{Grochenig01}.  It satisfies
\begin{equation}\label{eq:zak-seams}
 Zf(x+1,\nu)=e^{2\pi i\nu}Zf(x,\nu),
 \qquad
 Zf(x,\nu+1)=Zf(x,\nu).
\end{equation}
Fold one frequency period into two components by
\begin{equation}\label{eq:vector-zak}
 (\cZ f)(x,\omega)=2^{-1/2}
 \begin{pmatrix}
  Zf(x,\omega/2)\\
  Zf(x,(\omega+1)/2)
 \end{pmatrix}.
\end{equation}
This is the two-component vector-Zak transform used in the recent
counterexample construction of Faulhuber, Petersen, van Velthoven, and
Voigtlaender; see \cite[Section~3.2]{FPVV26}.  We keep the sewing matrices
explicit because their noncommuting return holonomy is essential in the
four-point construction below.  The map $\cZ$ is unitary from $L^2(\R)$
onto the measurable sections of a rank-two bundle $E\to\T^2$ with sewing
matrices
\begin{equation}\label{eq:seams}
 S_1(\omega)=e^{\pi i\omega}Z_0,
 \qquad
 S_2=X_0,
 \qquad
 Z_0=\begin{pmatrix}1&0\\0&-1\end{pmatrix},
 \quad
 X_0=\begin{pmatrix}0&1\\1&0\end{pmatrix}.
\end{equation}
Thus a lifted section $F:\R^2\to\C^2$ represents a section of $E$ exactly
when
\begin{equation}\label{eq:section-seams}
 F(x+1,\omega)=S_1(\omega)F(x,\omega),
 \qquad
 F(x,\omega+1)=S_2F(x,\omega).
\end{equation}

For $m,n\in\Z$, direct substitution in \eqref{eq:zak} gives
\begin{equation}\label{eq:Lmn}
 \cZ W(m,n/2)\cZ^{-1}=L_{m,n}(x,\omega),
\end{equation}
where
\begin{equation}\label{eq:Lmn-formula}
 L_{m,n}=e^{\pi i(nx-m\omega+mn/2)}
 \begin{cases}
 \diag(1,e^{-\pi im}),&n\equiv0\pmod2,\\[1mm]
 \begin{pmatrix}0&1\\e^{-\pi im}&0\end{pmatrix},&n\equiv1\pmod2.
 \end{cases}
\end{equation}
Consequently the three-term lattice polynomial in
\eqref{eq:main-eigen} becomes multiplication by
\begin{equation}\label{eq:A}
 A(x,\omega)=I+\frac12e^{-\pi i\omega}Z_0
                +\frac12e^{\pi ix}X_0.
\end{equation}
Writing
\[
 u=e^{\pi ix},\qquad v=e^{-\pi i\omega},
\]
we have
\begin{equation}\label{eq:detA}
 \det A=1-\frac{u^2+v^2}{4},
 \qquad
 |\det A|\ge\frac12.
\end{equation}
Thus the lattice polynomial is everywhere invertible on the rank-two Zak
bundle.

Let $\tau=(\alpha,\beta)$ and put
\[
 T_\tau z=z-\tau\quad(z\in\T^2).
\]
The Zak covariance of $W(\alpha,\beta/2)$ gives the bundle map
\begin{equation}\label{eq:Ctau}
 C_\tau(x,\omega)
 =e^{\pi i\beta(x-\alpha/2)}A(x,\omega):E_{T_\tau z}\longrightarrow E_z.
\end{equation}
The identities
\[
 A(-u,v)=Z_0A(u,v)Z_0,
 \qquad
 A(u,-v)=X_0A(u,v)X_0
\]
together with the scalar in \eqref{eq:Ctau} show that this is a genuine
bundle morphism across both seams.  Hence \eqref{eq:main-eigen} is equivalent
to finding a nonzero smooth section $F$ and $\lambda\in\C^*$ such that
\begin{equation}\label{eq:cocycle-eigen}
 C_\tau(z)F(T_\tau z)=\lambda F(z).
\end{equation}

\section{The rational return and its contracting line}\label{sec:return}

Set
\[
 \theta=(1/3,1/3)
\]
and consider the three-step cocycle
\begin{equation}\label{eq:Dtau}
 D_\tau(z)=C_\tau(z)C_\tau(T_\tau z)C_\tau(T_\tau^2z):
 E_{T_\tau^3z}\longrightarrow E_z.
\end{equation}
At $\tau=\theta$, the input fibre is over $z-(1,1)$.  The order of the two
sewing operations is essential: from \eqref{eq:section-seams},
\begin{equation}\label{eq:holonomy}
 F(x-1,\omega-1)=vX_0Z_0F(x,\omega).
\end{equation}
Put $r=e^{\pi i/3}$.  Removing unit scalar factors, which do not affect
projective dynamics, gives the projective return
\begin{equation}\label{eq:P}
 P(u,v)=A(u,v)A(ur^{-1},vr)A(ur^{-2},vr^2)X_0Z_0.
\end{equation}
Exact multiplication in
$\Q(r)[u^{\pm1},v^{\pm1}]$, with $r^2-r+1=0$, yields
\begin{align}
 \tr P&=(3/2-3r)uv,                                      \label{eq:trP}\\
 \det P&=1-\frac{v^6}{64}-\frac{3u^2v^2}{16}-\frac{u^6}{64}.
                                                               \label{eq:detP}
\end{align}
Restoring the scalar factors from the three Weyl steps and from
\eqref{eq:holonomy} gives the genuine bundle endomorphism
\begin{equation}\label{eq:R}
 R(u,v)=(-iuv)P(u,v).
\end{equation}
Thus
\begin{align}
 \tr R&=-i(3/2-3r)u^2v^2,                                 \label{eq:trR}\\
 \det R&=-u^2v^2
 \left(1-\frac{v^6}{64}-\frac{3u^2v^2}{16}-\frac{u^6}{64}\right).
                                                               \label{eq:detR}
\end{align}
The parenthesis in \eqref{eq:detR} lies in the disk of radius $7/32$ about
$1$, and is therefore nonzero and null-homotopic in $\C^*$.

\begin{lemma}[uniform return domination]\label{lem:domination}
The two eigenvalues of $R(z)$ have distinct moduli for every $z\in\T^2$, with
a uniform gap.  The smaller-modulus eigenvalue
$\lambda_s^{\mathrm{full}}$ is nowhere zero and has winding $(0,0)$ on
$\T^2$.  The contracting eigenline is the same for $R$ and $P$.
\end{lemma}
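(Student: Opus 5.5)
The plan is to work first with $P$, whose trace and determinant are given exactly by \eqref{eq:trP} and \eqref{eq:detP}, and then pass to $R$ through the scalar factor in \eqref{eq:R}.

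\emph{Step 1: gap criterion.} On $\T^2$ we have $|u|=|v|=1$. Since $|3/2-3r|^2=(3/2-3r)(3/2-3\bar r)=9/4-9/2+9=27/4$, we get $|\tr P|=3\sqrt3/2$, a constant. For a $2\times2$ matrix with eigenvalues $\mu_\pm=\tfrac12(t\pm\sqrt{t^2-4d})$, the moduli coincide exactly when $(t^2-4d)/t^2\in(-\infty,0]$. So distinct moduli amount to $t^2/(4d)\notin[1,\infty)$. Here $|t^2|=27/4$. From the $7/32$ disk bound stated after \eqref{eq:detR}, $|d|\le 1+7/32=39/32$, hence $|t^2/(4d)|\ge 27/(4\cdot 4\cdot 39/32)=54/39>1$.

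This modulus bound alone does not exclude real values in $[1,\infty)$, so I also need the argument. Set $w=uv$. Then $t^2=(3/2-3r)^2w^2=-\tfrac{27}{4}w^2$, using $(3/2-3r)^2=\tfrac94-9r+9r^2=\tfrac94-9=-\tfrac{27}{4}$, since $r^2-r=-1$. Also $d=1-\delta$ with $|\delta|\le7/32$. So I must show that $-\tfrac{27}{16}w^2/(1-\delta)$ is never a real number $\ge1$.

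That requirement would force $\arg(1-\delta)\equiv\pi+2\arg w$. This fails whenever $w^2$ is away from $-1$, because $|\arg(1-\delta)|\le\arcsin(7/32)$. Near $w^2=-1$, however, the condition is not excluded by crude bounds. There I would use the structure of $\delta=\tfrac{v^6+u^6}{64}+\tfrac{3}{16}w^2$. At $w^2=-1$, the term $\tfrac{3}{16}w^2$ equals $-\tfrac3{16}$, and $u^6+v^6=u^6+u^{-6}w^6$ with $w^6=-1$ gives $u^6-u^{-6}=2i\operatorname{Im}u^6$. Hence $1-\delta=\tfrac{19}{16}-\tfrac{i}{32}\operatorname{Im}(u^6)$. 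This can make $t^2/(4d)$ real and $>1$ only when $\operatorname{Im}u^6=0$, and in that case the value is $\tfrac{27}{16}\cdot\tfrac{16}{19}>1$. So the crude bound genuinely fails here, and a finer argument is needed.

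\emph{Obstacle.} This is the main obstacle. I expect the correct discriminant to need the exact identity rather than estimates, and my arithmetic near $w^2=-1$ must be rechecked. I would first reparametrise by $w=uv$ and $s=u^6$ and show that $\operatorname{Im}\bigl(t^2\bar d\bigr)$ and the real part cannot simultaneously satisfy the coincidence condition. If that fails, I would fall back on the interval certificate, as the paper announces, bounding $\operatorname{dist}\bigl((t^2-4d)/t^2,(-\infty,0]\bigr)$ below on a finite grid with Lipschitz control. Either route yields a uniform positive gap $|\mu_+|-|\mu_-|$, by compactness and continuity.

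\emph{Step 2: non-vanishing and winding.} Since $\det P\ne0$, we have $\lambda_s\ne0$. Because the gap is uniform, $\lambda_s$ and its eigenline are continuous and single-valued on $\T^2$; the line is the kernel of $P-\lambda_s$. The scalar $-iuv$ in \eqref{eq:R} rescales both eigenvalues equally, so $R$ and $P$ have the same contracting line, and $\lambda_s^{\mathrm{full}}=-iuv\,\lambda_s^P$.

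For the winding, use $\lambda_s\lambda_u=\det$ with $|\lambda_u|>|\lambda_s|$. I would compute the winding of $\lambda_s^P$ through a homotopy: deform $\delta\to0$, keeping the gap, and reduce to $\lambda_\pm=\tfrac{t}{2}(1\pm\sqrt{1-4d/t^2})$ with a branch of the square root that stays continuous. The winding of $\lambda_s^P$ then equals that of $t=c\,uv$, namely $(1,-1)$ in the coordinates $(x,\omega)$, or its negative, depending on which root is smaller, corrected by the null-homotopic factor $\det P$. Multiplying by $-iuv$, which has winding $(1,-1)$ with the opposite sign convention, should cancel this and give $(0,0)$. Getting these signs right is the delicate bookkeeping, and it is consistent with $\det R=-u^2v^2\det P$ splitting as $(uv)\cdot(uv)$ between the two eigenvalues.
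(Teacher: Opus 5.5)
Your Step 1 rests on an algebra slip. From the correct criterion $(t^2-4d)/t^2\in(-\infty,0]$ one gets $4d/t^2\in[1,\infty)$, i.e.\ $t^2/(4d)\in(0,1]$. It does \emph{not} give $t^2/(4d)\in[1,\infty)$. With the correct condition, your own estimate $|t^2/(4d)|\ge 54/39>1$ already excludes equal moduli at every point of $\T^2$. So the obstacle near $w^2=-1$ is illusory. At the point you isolate, $t^2=27/4$, $d=19/16$ and $t^2-4d=2>0$, so the eigenvalues $(t\pm\sqrt2)/2$ plainly have different moduli.

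As written, however, the proposal does not prove the gap. You declare the crude bound insufficient and defer to two fallbacks. The first, the reparametrisation argument, would actually fail: under your inverted criterion the ``coincidence'' genuinely occurs at $w^2=-1$, $\operatorname{Im}u^6=0$ (e.g.\ $u=1$, $v=i$). The second is a new interval computation, which is not needed. The paper's argument is one line: if both eigenvalues had modulus $\rho$, then $|\tr P|^2\le4\rho^2=4|\det P|$, contradicting $27/4>39/8$. Equivalently, $|4\det P/(\tr P)^2|\le13/18$. Then $1-4d/t^2$ lies in the disk of radius $13/18$ about $1$, so your fallback quantity $\operatorname{dist}\bigl((t^2-4d)/t^2,(-\infty,0]\bigr)$ is at least $5/18$ analytically.

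The same bound is the missing input in Step 2, where you leave open whether the small eigenvalue winds like $t$ ``or its negative''. Since $|4d/t^2|\le 13/18<1$, the principal root $s=\sqrt{1-4d/t^2}$ has $\operatorname{Re}s>0$. Hence the smaller root is $\frac t2(1-s)=\frac dt\cdot\frac2{1+s}$, and the factor $2/(1+s)$ is null-homotopic through nonvanishing functions. So the small eigenvalue behaves like $d/t$, not like $t$.

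It is cleanest to do this directly for $R$, as the paper does. $\tr R$ and $\det R$ are genuine functions on $\T^2$, each of winding $(1,-1)$. Therefore $\lambda_s^{\mathrm{full}}$, which is $\det R/\tr R$ times a null-homotopic factor, has winding $(0,0)$. Routing the computation through $P$ is awkward, because $\tr P\propto uv=e^{\pi i(x-\omega)}$ is not $\Z^2$-periodic; indeed $P(-u,v)=-Z_0P(u,v)Z_0$. So neither $t$ nor $\lambda_s^P$ has a winding $(1,-1)$ on $\T^2$, and your sign bookkeeping with $-iuv$ (``opposite sign convention'') does not yet establish the cancellation. Your observations that $\lambda_s\neq0$ and that $R$ and $P$ share eigenlines are correct.
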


\begin{proof}
Since $|-iuv|=1$, the two matrices have the same eigenlines and the same ratio
of eigenvalue moduli.  From \eqref{eq:trP}--\eqref{eq:detP},
\[
 |\tr P|^2=\frac{27}{4},
 \qquad
 |\det P|\le1+\frac1{64}+\frac3{16}+\frac1{64}=\frac{39}{32}.
\]
If the two eigenvalues had the same modulus $\rho$, then
$|\tr P|^2\le4\rho^2=4|\det P|$, contrary to
$27/4>39/8$.  Compactness gives a uniform gap.

The smaller root of the characteristic polynomial of the full return is
\begin{equation}\label{eq:small-root}
 \lambda_s^{\mathrm{full}}
 =\frac{\det R}{\tr R}
   \frac{2}{1+\sqrt{1-4\det R/(\tr R)^2}},
\end{equation}
where the square root is the branch near $1$.  Indeed,
\[
 \left|\frac{4\det R}{(\tr R)^2}\right|
 =\left|\frac{4\det P}{(\tr P)^2}\right|
 \le\frac{13}{18}<1.
\]
Hence the square-root argument remains in the disk of radius $13/18$ about
$1$, and the second factor in \eqref{eq:small-root} is homotopic through
nonvanishing functions to $1$.  Equations \eqref{eq:trR}--\eqref{eq:detR}
show that $\tr R$ and $\det R$ both have winding $(1,-1)$.  Their quotient
therefore has winding zero, proving
$\wind(\lambda_s^{\mathrm{full}})=(0,0)$.
\end{proof}

Eigenvalue winding does not determine the topology of its eigenline.  We now
remove that separate obstruction.  On the fundamental square let
\begin{equation}\label{eq:chi}
 \chi(x,\omega)=2^{-1/2}
 \begin{pmatrix}s+cv\\s-cv\end{pmatrix},
 \qquad
 s=\sin(\pi x/2),\quad c=\cos(\pi x/2),
\end{equation}
and extend by the sewing rules.  Put
\begin{equation}\label{eq:hframe}
 h_0=L_{0,1}\chi,
 \qquad
 J=\adj P,
 \qquad
 h=Jh_0,
 \qquad
 n=(-\overline{h_2},\overline{h_1})^T.
\end{equation}
The section $h_0$ has unit norm and is nowhere vanishing.  Since $P$ is
invertible, $h$ is nowhere vanishing as well.

Here it is useful to distinguish once more between projective and genuine
returns.  In dimension two, $\adj(cP)=c\,\adj(P)$.  By \eqref{eq:R},
\begin{equation}\label{eq:genuine-H}
 \C\,\adj(P)h_0=\C\,\adj(R)h_0.
\end{equation}
Thus the line
\[
 H:=\C h
\]
is the image of the trivial line $\C h_0$ under the inverse of the genuine
bundle automorphism $R$, up to a nowhere-zero scalar.  In particular, $H$ is
a globally defined trivial line bundle.  This is the role of the frame
$h,n$; the matrix $P$ is used only because it is the scalar-normalized
representative entering the projective calculation.

Rather than normalizing the equal-length frame, set
\begin{align}\label{eq:kij-def}
 k_{11}&=\langle h,Jh\rangle, &
 k_{12}&=\langle h,Jn\rangle,\notag\\
 k_{21}&=\langle n,Jh\rangle, &
 k_{22}&=\langle n,Jn\rangle.
\end{align}
Since $h$ and $n$ are orthogonal and have the same norm, the coordinate matrix
of $J$ in the frame $(h,n)$ is $\|h\|^{-2}(k_{ij})$.  The common scalar is
irrelevant to projective dynamics.  Thus the induced action on a graph
coordinate is
\begin{equation}\label{eq:Phi}
 \Phi_z(\xi)=\frac{k_{21}+k_{22}\xi}{k_{11}+k_{12}\xi}.
\end{equation}

\begin{proposition}[finite graph certificate]\label{prop:certificate}
For every $z\in\T^2$ and every $|\xi|\le1/4$,
\begin{equation}\label{eq:graph-cert}
 |\Phi_z(\xi)|<\frac14,
 \qquad
 |\partial_\xi\Phi_z(\xi)|<1.
\end{equation}
More precisely, the outward-rounded certificate gives
\begin{equation}\label{eq:cert-numbers}
 d\ge 15.4044994528652104323,
 \quad
 b\le0.2499996482177990124,
 \quad
 c\le0.7725133354867000155,
\end{equation}
where
\begin{align*}
 d&=|k_{11}|-\tfrac14|k_{12}|,\\
 b&=\frac{|k_{21}|+\frac14|k_{22}|}{d},\\
 c&=\frac{|k_{11}k_{22}-k_{12}k_{21}|}{d^2}.
\end{align*}
Consequently the contracting eigenline $L_\theta$ of $R$ is a graph over
$H$, and is therefore topologically trivial.
\end{proposition}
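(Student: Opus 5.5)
The plan has two parts: first the elementary reduction from the three constants $d,b,c$ in \eqref{eq:cert-numbers} to the inequalities \eqref{eq:graph-cert}, and then the outward-rounded interval evaluation that produces those constants uniformly over $\T^2$.

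\emph{Reduction.} For $|\xi|\le1/4$ the denominator of \eqref{eq:Phi} satisfies
\[
|k_{11}+k_{12}\xi|\ge|k_{11}|-\tfrac14|k_{12}|=d>0.
\]
Hence $\Phi_z$ is holomorphic on the closed disk, and
\[
|\Phi_z(\xi)|\le\frac{|k_{21}|+\frac14|k_{22}|}{d}=b<\tfrac14 .
\]
Differentiating the M\"obius map gives
\[
\partial_\xi\Phi_z=\frac{k_{11}k_{22}-k_{12}k_{21}}{(k_{11}+k_{12}\xi)^2},
\qquad
|\partial_\xi\Phi_z|\le c<1 .
\]
So $\Phi_z$ maps the disk $\{|\xi|\le1/4\}$ strictly into itself and is a uniform contraction there, for every $z$.

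\emph{Invariant graph and triviality.} Since $J=\adj P$ is a scalar multiple of $P^{-1}$, $\Phi_z$ is the graph transform of $P^{-1}$ in the frame $(h,n)$. The $P^{-1}$-invariant line with the attracting projective behavior is the eigenline of the larger eigenvalue of $P^{-1}$, that is, the contracting eigenline $L_\theta$ of $P$. By Lemma~\ref{lem:domination} this line is the same for $R$ and $P$.

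For fixed $z$ the fibre map is a linear map of a fibre to itself: the return $R$ sends the fibre over $T^3_\theta z$ to the fibre over $z$, and $T^3_\theta z=z-(1,1)$ is the same point of $\T^2$. Banach's fixed point theorem therefore gives a unique fixed point $\xi_*(z)$ with $|\xi_*(z)|<1/4$. This fixed line is an eigenline of $J$, hence of $P$. It must be the dominant eigenline of $J$, i.e. the contracting line of $P$, because the other eigenline is repelling for the projective action; this is consistent with $|\partial_\xi\Phi_z|<1$ at the fixed point, where the derivative equals the eigenvalue ratio.

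By uniqueness and the continuity of $k_{ij}$, the function $\xi_*$ is continuous; it is even smooth by the implicit function theorem, since $\partial_\xi\Phi_z-1\neq0$. Thus $L_\theta=\C(h+\xi_* n)$, and $h+\xi_*n$ is a nowhere-vanishing global section, because $h$ and $n$ are orthogonal and nonzero. Since $H=\C h$ is trivial and $\C h_0$ is trivial, $L_\theta$ is trivial as well. Well-definedness across the seams follows because $h$ and $n$ transform compatibly: $n$ is built from the sewing matrices, which are unitary up to phase. I would check carefully that $n$ defines a section, since conjugation reverses phases; if it does not, I would argue via the graph over $H$ inside the orthogonal complement bundle $H^\perp$ instead.

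\emph{Certificate.} The main work is the uniform bounds \eqref{eq:cert-numbers}. I would compute the $k_{ij}$ exactly as trigonometric polynomials in $\sin,\cos$ of $\pi x/2$ and $\pi\omega$ over $\Q(r)$. Then I would cover the fundamental square by a finite grid of boxes and evaluate $d,b,c$ in outward-rounded interval arithmetic on each box, using a mean-value or Taylor bound with explicit derivative estimates to control the box widths. Taking the minimum of $d$ and the maxima of $b,c$ over all boxes gives the stated numbers.

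The hardest step is that $b$ sits extremely close to $1/4$ (margin about $3.5\cdot10^{-7}$), so naive interval enclosures will fail. I would first locate the near-maximizer of $b$ numerically and refine the subdivision adaptively there, using centered (mean-value) forms rather than natural extensions so the overestimation is quadratic in the box width. The bounds on $d$ and $c$ have comfortable slack and should need little refinement.
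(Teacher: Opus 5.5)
Your proposal is correct and follows the paper's proof: the bounds on $d,b,c$ give self-mapping and uniform contraction of the disk $|\xi|\le 1/4$, and the fixed point is the dominant eigenline of $J=\adj P$, hence the contracting line of $P$ and $R$. Projection along the $n$-direction then identifies this line with the trivial line $H$, and the numbers come from an adaptive outward-rounded interval computation on the fundamental square.

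Two remarks. Your caution about $n$ is justified: $n$ does not transform like $h$ across the seams. However, $\C n=H^\perp$ is still a well-defined line subbundle because the sewing matrices are unitary, so your fallback is exactly the paper's ``projection along the second frame direction''. Also, only continuity of $\xi_*$ is needed; your implicit-function smoothness aside would require a smooth frame, and $h_0=L_{0,1}\chi$ is only continuous across $x\in\Z$.
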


\begin{proof}
The three inequalities in \eqref{eq:cert-numbers} imply that the closed graph
disk $|\xi|\le1/4$ is mapped strictly into itself by the projective action of
$J$, with a uniform contraction factor strictly below one.  The contraction
mapping theorem gives a unique invariant graph.  Since $J$ is projectively
$P^{-1}$, this graph is precisely the eigenline corresponding to the
smaller-modulus eigenvalue of $P$, equivalently of $R$.  Projection along the
second frame direction identifies this graph with the trivial line $H$.
\end{proof}

\section{Quantitative continuation to an explicit irrational translation}\label{sec:perturb}

The rational parameter is used only to reveal the invariant geometry.  We now
show quantitatively that this geometry survives on a small, explicit
parameter square.  This is the step that turns the existence statement into
the concrete choice \eqref{eq:explicit-tau}.

Write
\[
 \tau=\theta+\Delta,
 \qquad
 \Delta=(\delta_1,\delta_2),
 \qquad
 \delta=\|\Delta\|_\infty.
\]
Since
$3\tau=(1,1)+3\Delta$, the three-step return maps
$E_{z-(1,1)-3\Delta}$ to $E_z$.  After applying the fixed $(1,1)$ sewing
identification, and again discarding nonzero scalar factors, its projective
representative is
\begin{equation}\label{eq:Ptau}
 P_\tau(z)=A(z)A(z-\tau)A(z-2\tau)X_0Z_0:
 E_{z-3\Delta}\longrightarrow E_z.
\end{equation}
Let $J_\tau=\adj P_\tau$, viewed projectively as the inverse return.  The
fixed rational frame $h,n$ from \eqref{eq:hframe} is evaluated at the source
and target points.  With $w=z-3\Delta$, define
\begin{align}\label{eq:pert-k}
 k_{11}^\tau&=\langle h(w),J_\tau(z)h(z)\rangle,&
 k_{12}^\tau&=\langle h(w),J_\tau(z)n(z)\rangle,\notag\\
 k_{21}^\tau&=\langle n(w),J_\tau(z)h(z)\rangle,&
 k_{22}^\tau&=\langle n(w),J_\tau(z)n(z)\rangle.
\end{align}
Because $h(w)$ and $n(w)$ are orthogonal and have the same norm, the common
normalizing factor cancels from the corresponding projective coordinate.
At $\tau=\theta$ these are exactly the coefficients in
Proposition~\ref{prop:certificate}.

\begin{proposition}[an explicit continuation square]\label{prop:explicit-square}
If
\begin{equation}\label{eq:explicit-square}
 \|\tau-\theta\|_\infty\le 2\cdot10^{-12},
\end{equation}
then the inverse three-step projective return maps the closed graph disk
$|\xi|\le1/4$ strictly into itself and is a strict contraction there.  It has
a unique invariant graph $L_\tau$.  The line bundle $L_\tau$ is smooth,
topologically trivial, depends continuously on $\tau$, and satisfies both
\begin{align}
 D_\tau(z)L_\tau(T_\tau^3z)&=L_\tau(z),                 \label{eq:three-inv}\\
 C_\tau(z)L_\tau(T_\tau z)&=L_\tau(z).                 \label{eq:one-inv}
\end{align}
In particular, the parameter \eqref{eq:explicit-tau} is admissible.
\end{proposition}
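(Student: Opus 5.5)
The plan is to prove Proposition~\ref{prop:explicit-square} as a quantitative perturbation of Proposition~\ref{prop:certificate}. The margins there are small but explicit, and the perturbation is of size $O(10^{-12})$. We compare the coefficients $k_{ij}^\tau$ in \eqref{eq:pert-k} with the rational coefficients $k_{ij}$ at the same $z$. There are two sources of change. First, $J_\tau(z)=\adj P_\tau(z)$ differs from $J(z)$ because the factors $A(z-\tau)$ and $A(z-2\tau)$ are evaluated at shifted points. Second, the frame at the source is $h(w),n(w)$ with $w=z-3\Delta$ rather than $h(z),n(z)$.

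For the first source, $A$ is a trigonometric polynomial of degree one in $u,v$ with coefficients of size at most $1/2$. Hence $\|\partial_xA\|,\|\partial_\omega A\|\le\pi/2$ and $\|A\|\le2$. A telescoping estimate then gives
\[
\|P_\tau-P\|\le 2\cdot2\cdot\tfrac{\pi}{2}\cdot(1+2)\,\delta\cdot\text{const}=O(\delta).
\]
In dimension two, $\adj$ is linear in the entries, so $\|J_\tau-J\|=\|P_\tau-P\|$. For the second source, $h=Jh_0$ and $n$ are explicit trigonometric expressions (via \eqref{eq:chi} and $L_{0,1}$), so their Lipschitz constants are bounded by crude norms of $J$, $\nabla J$, $h_0$ and $\nabla h_0$. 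The half-angle functions $s,c$ must be extended through the sewing matrices, but $h,n$ are globally smooth sections, so I would bound their gradients on the fundamental square. This yields $|k_{ij}^\tau-k_{ij}|\le K\delta$ with an explicit $K$, say $K\le10^4$.

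Next I would propagate this bound through the certificate quantities. We have $d^\tau\ge d-\tfrac54K\delta$. The numerators of $b$ and $c$ move by $O(K\delta)$ and $O(K\delta\max|k_{ij}|)$ respectively, with $\max|k_{ij}|$ bounded by $\|P\|^2\|h\|^2\le(27/8)^2\cdot\text{const}$. The binding constraint is $b$, whose margin is only $1/4-0.2499996482\approx3.5\cdot10^{-7}$. Multiplied by $d\approx15.4$, this allows perturbations of the numerator up to about $5\cdot10^{-6}$. With $\delta=2\cdot10^{-12}$ and $K$ even as large as $10^5$, the perturbation is about $10^{-7}$, which fits comfortably. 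The main obstacle is producing honest explicit constants for $K$, especially the gradient of $h$ including the seam behaviour of $\chi$. I would bound everything with crude sup-norms rather than refine, since there are roughly six orders of magnitude of slack.

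With $b^\tau<1/4$ and $c^\tau<1$, the same argument as in Proposition~\ref{prop:certificate} shows that the graph map $\xi\mapsto\Phi^\tau_z(\xi)$ from fibre $z$ to fibre $w$ maps the $1/4$-disk strictly into itself with contraction $c^\tau<1$. Because the base map $z\mapsto z-3\Delta$ is now a nontrivial translation, I would apply the contraction theorem on the Banach space of continuous sections $\xi:\T^2\to\{|\xi|\le1/4\}$ under the graph transform $\xi\mapsto\Phi^\tau_\cdot(\xi(\cdot))$ composed with the base shift. This gives a unique invariant continuous graph $L_\tau$ satisfying \eqref{eq:three-inv}.

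Continuity in $\tau$ follows from the uniform contraction together with continuous dependence of $\Phi^\tau$. For smoothness, I would use the standard fibre-contraction ($C^r$ section) theorem. Because the base map is an isometric translation, the derivative graph transform is also a contraction with rate $c^\tau<1$ for every order $r$, so $L_\tau$ is $C^\infty$. Triviality holds because $L_\tau$ is a graph over the trivial bundle $H$.

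Finally, for \eqref{eq:one-inv}, set $L'=C_\tau^{-1}L_\tau$ pulled back along $T_\tau$. Since $C_\tau$ commutes with $D_\tau$ in the sense $C_\tau(z)D_\tau(T_\tau z)=D_\tau(z)C_\tau(T^3_\tau z)$, the line $L'$ is again $D_\tau$-invariant. It is also a graph in the disk, because it is the dominated contracting line, and domination is preserved by conjugation. Uniqueness then forces $L'=L_\tau$. The explicit parameter \eqref{eq:explicit-tau} satisfies $\|\tau-\theta\|_\infty=10^{-12}\sqrt3<2\cdot10^{-12}$, so it is admissible.
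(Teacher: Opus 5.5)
Your proposal is correct and follows essentially the same route as the paper. A telescoping bound on $P_\tau-P_\theta$ plus a Lipschitz bound on the frame gives $|k^\tau_{ij}-k^\theta_{ij}|=O(\delta)$ (the paper obtains $30000\,\delta$). The $b$-margin $(1/4-B)D\approx5.4\cdot10^{-6}$ is the binding constraint. The contraction-mapping and fibre-contraction arguments over the isometric base shift then give a smooth, trivial invariant graph. One-step invariance follows from the cocycle identity plus uniqueness of the stable line of a dominated splitting; that uniqueness is what does the work, so you need not separately place $C_\tau^{-1}L_\tau$ in the disk.

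There is one harmless slip: $\|A\|$ attains $(1+\sqrt5)/2>3/2$, so $\|P\|\le27/8$ is unjustified. Use $\|P\|\le8$ and $|k_{ij}|\le\|J\|^3\le512$ as the paper does, which still leaves ample slack in the $c$-estimate.
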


\begin{proof}
We make the perturbation estimate deliberately coarse.  On the unit torus,
\begin{equation}\label{eq:A-norms}
 \|A\|\le2,
 \qquad
 \|\partial_xA\|,\ \|\partial_\omega A\|\le\frac\pi2.
\end{equation}
Only the second and third factors of \eqref{eq:Ptau} move with $\tau$.
A telescoping product estimate therefore gives
\begin{equation}\label{eq:P-pert}
 \|P_\tau(z)-P_\theta(z)\|
 \le12\pi\delta<40\delta.
\end{equation}
For $2\times2$ matrices, adjugation is linear in the entries and preserves
the spectral norm up to the fixed unitary transpose identification.  Hence
\begin{equation}\label{eq:J-bounds}
 \|J_\tau-J_\theta\|<40\delta,
 \qquad
 \|J_\tau\|\le8.
\end{equation}
The explicit unit section $h_0$ satisfies
\[
 \|h_0\|=1,
 \qquad
 \|\partial_xh_0\|\le\frac{3\pi}{2},
 \qquad
 \|\partial_\omega h_0\|\le\pi.
\]
Differentiating the rational product gives
$\|\partial_xJ_\theta\|,\|\partial_\omega J_\theta\|\le6\pi$.  Since
$h=J_\theta h_0$, it follows that
\begin{equation}\label{eq:h-bounds}
 \|h\|=\|n\|\le8,
 \qquad
 \|\partial_xh\|,\ \|\partial_\omega h\|<60,
\end{equation}
and the same derivative bound holds for $n$.  Since
$\|w-z\|_\infty\le3\delta$,
\begin{equation}\label{eq:frame-shift}
 \|h(w)-h(z)\|,\ \|n(w)-n(z)\|<400\delta.
\end{equation}
Combining \eqref{eq:J-bounds}--\eqref{eq:frame-shift} in
\eqref{eq:pert-k} gives, uniformly in $z$,
\begin{equation}\label{eq:k-perturbation}
 |k_{ij}^\tau-k_{ij}^\theta|<30000\delta
 \qquad(1\le i,j\le2).
\end{equation}
Under \eqref{eq:explicit-square} we may therefore set
\[
 \eta:=6\cdot10^{-8}
\]
and use $|k_{ij}^\tau-k_{ij}^\theta|\le\eta$.

Let $r_0=1/4$, and denote the three rational certificate bounds in
\eqref{eq:cert-numbers} by
\[
 D=15.4044994528652104323,
 \quad B=0.2499996482177990124,
 \quad C=0.7725133354867000155.
\]
If
\[
 d_\tau=|k_{11}^\tau|-r_0|k_{12}^\tau|,
 \qquad
 N_\tau=|k_{21}^\tau|+r_0|k_{22}^\tau|,
\]
then
\begin{align*}
 d_\tau&\ge d_\theta-(1+r_0)\eta,\\
 N_\tau&\le N_\theta+(1+r_0)\eta.
\end{align*}
At the rational parameter,
$N_\theta\le B d_\theta$ and $d_\theta\ge D$.  Hence
\begin{align}\label{eq:image-margin}
 r_0d_\tau-N_\tau
 &\ge (r_0-B)D-(1+r_0)^2\eta\\
 &>5.3\cdot10^{-6}>0.\notag
\end{align}
Thus the perturbed projective map still sends the closed radius-$1/4$ graph
disk strictly into itself.

It remains to retain contraction.  From \eqref{eq:h-bounds} and
\eqref{eq:J-bounds}, the rational coefficients satisfy
$|k_{ij}^\theta|\le512$.  Consequently
\begin{equation}\label{eq:detK-pert}
 \left|
 (k_{11}^\tau k_{22}^\tau-k_{12}^\tau k_{21}^\tau)
 -(k_{11}^\theta k_{22}^\theta-k_{12}^\theta k_{21}^\theta)
 \right|
 \le2048\eta+2\eta^2.
\end{equation}
Moreover
$d_\tau\ge D-(1+r_0)\eta>15.4044$.  Using
$|\det(k_{ij}^\theta)|\le C d_\theta^2$ and
$d_\theta\le d_\tau+(1+r_0)\eta$, equations
\eqref{eq:detK-pert} and \eqref{eq:cert-numbers} give
\begin{equation}\label{eq:pert-contraction}
 \sup_{z,\,|\xi|\le1/4}|\partial_\xi\Phi_{\tau,z}(\xi)|<0.773<1.
\end{equation}
This proves a uniform contraction on the whole parameter square.

The contraction mapping theorem now gives a unique continuous invariant graph
$L_\tau$, and the usual fixed-point estimate gives continuous dependence on
$\tau$.  Projection of the graph onto $H$ is a bundle isomorphism, so
$L_\tau$ is trivial.  Smoothness in $z$ follows directly by differentiating
the graph iteration: at derivative order $j$, the only term containing the
$j$th derivative of the previous iterate is multiplied by
$\partial_\xi\Phi_{\tau,z}$, whose norm is uniformly $<0.773$; all remaining
terms contain only lower derivatives.  Induction in $j$ therefore gives
uniform convergence of every derivative and hence $L_\tau\in C^\infty$.

For a two-dimensional invertible cocycle, uniform contraction of the inverse
projective graph transform on an invariant graph is the standard graph-transform
criterion for a dominated splitting; the invariant graph is its unique stable
line.  Hence the fixed graph above is the stable line of a dominated splitting
for the three-step cocycle.  The cocycle identity
\begin{equation}\label{eq:cyclic}
 D_\tau(z)C_\tau(T_\tau^3z)=C_\tau(z)D_\tau(T_\tau z)
\end{equation}
shows that
\[
 \widetilde L_\tau(z):=C_\tau(z)L_\tau(T_\tau z)
\]
is another stable line for $D_\tau$.  Indeed, iterating
\eqref{eq:cyclic} conjugates the three-step dynamics on
$L_\tau(T_\tau z)$ to that on $\widetilde L_\tau(z)$ by the bounded invertible
map $C_\tau$.  The stable line of a dominated splitting is unique, so
$\widetilde L_\tau=L_\tau$.  This is \eqref{eq:one-inv}.

Finally, for \eqref{eq:explicit-tau},
\[
 \|\tau-\theta\|_\infty
 =10^{-12}\max\{\sqrt2,\sqrt3\}
 <2\cdot10^{-12},
\]
which proves admissibility of the announced parameter.
\end{proof}

\begin{remark}\label{rem:why-explicit}
The scale $10^{-12}$ is not intended to be optimal.  It is chosen so that a
short perturbative estimate fits comfortably inside the already certified
rational margins.  No additional parameter-space interval computation is
needed for the explicit example.
\end{remark}

\section{Winding and the smooth cohomology equation}\label{sec:cohomology}

For $\tau$ in the square \eqref{eq:explicit-square}, write the invariant line
as a graph over $H$.  Since $h$ is a nowhere-zero section of $H$, there is a
unique nowhere-zero smooth section $v_\tau$ of $L_\tau$ whose projection onto
$H$ is $h$.  The graph construction makes
\[
 (\tau,z)\longmapsto v_\tau(z)
\]
continuous and smooth in $z$.  By the one-step invariance
\eqref{eq:one-inv}, there is a unique smooth nonvanishing scalar function
$q_\tau:\T^2\to\C^*$ satisfying
\begin{equation}\label{eq:q}
 C_\tau(z)v_\tau(T_\tau z)=q_\tau(z)v_\tau(z).
\end{equation}
The family $(\tau,z)\mapsto q_\tau(z)$ is continuous.  For instance, with a
fixed Hermitian metric,
\[
 q_\tau(z)=
 \frac{\langle C_\tau(z)v_\tau(T_\tau z),v_\tau(z)\rangle}
      {\|v_\tau(z)\|^2}.
\]

At the rational parameter, three iterations of \eqref{eq:q} give the genuine
full-return identity
\begin{equation}\label{eq:q-product}
 \lambda_s^{\mathrm{full}}(z)
 =q_\theta(z)q_\theta(T_\theta z)q_\theta(T_\theta^2z).
\end{equation}
Translation of a nonvanishing torus function does not change either winding
number.  Lemma~\ref{lem:domination} therefore gives
\[
 3\wind(q_\theta)=\wind(\lambda_s^{\mathrm{full}})=(0,0),
\]
and hence
\begin{equation}\label{eq:qtheta-winding}
 \wind(q_\theta)=(0,0).
\end{equation}
The segment joining $\theta$ to the explicit parameter
\eqref{eq:explicit-tau} lies entirely in \eqref{eq:explicit-square}.  Since
$q_\tau$ is a continuous $\C^*$-valued family, winding is constant along this
segment.  Thus the function $q=q_\tau$ at our explicit parameter also has
zero winding in both torus directions.

The remaining issue is arithmetic.  The purpose of the particular
perturbation by $\sqrt2$ and $\sqrt3$ is to obtain a polynomial lower bound
for the small divisors of the translation.

\begin{lemma}[Diophantine bound]\label{lem:dio}
For $\alpha,\beta$ in \eqref{eq:explicit-tau}, there is a constant $c>0$ such
that for every $(m,n)\in\Z^2\setminus\{0\}$,
\begin{equation}\label{eq:dio}
 \|m\alpha+n\beta\|_{\R/\Z}
 \ge c(|m|+|n|)^{-3}.
\end{equation}
\end{lemma}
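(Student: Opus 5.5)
The plan is to reduce \eqref{eq:dio} to a lower bound for a linear form in $1,\sqrt2,\sqrt3$ with integer coefficients, and then use the norm from the biquadratic field $\Q(\sqrt2,\sqrt3)$. Put $\varepsilon=10^{-12}$ and fix $(m,n)\ne0$. Let $k\in\Z$ be the integer realizing $\|m\alpha+n\beta\|_{\R/\Z}$. Then
\[
 \|m\alpha+n\beta\|_{\R/\Z}
 =\Bigl|\tfrac{m+n}{3}-k+\varepsilon(m\sqrt2+n\sqrt3)\Bigr|
 =\frac{|\xi|}{3\cdot10^{12}},
\]
where
\[
 \xi=N+M\sqrt2+K\sqrt3,\qquad N=10^{12}(m+n-3k),\quad M=3m,\quad K=3n.
\]
So it suffices to show $|\xi|\ge c'(|m|+|n|)^{-3}$ with $c'$ independent of $m,n,k$.

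First, $\xi\ne0$. Since $1,\sqrt2,\sqrt3$ are linearly independent over $\Q$ and $(M,K)\ne(0,0)$, the form cannot vanish.

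Next, consider the four Galois conjugates $\xi_{\pm,\pm}=N\pm M\sqrt2\pm K\sqrt3$ of $\xi$ in $\Q(\sqrt2,\sqrt3)$. Their product is the field norm of $\xi$. It is a rational integer because $\xi$ is an algebraic integer, and it is nonzero because every conjugate of a nonzero element is nonzero. Hence
\[
 |\xi|\ \ge\ \prod_{(\pm,\pm)\ne(+,+)}|\xi_{\pm,\pm}|^{-1}.
\]

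It remains to bound the three other conjugates polynomially in $|m|+|n|$. This is the only place where care is needed, because $N$ carries the huge factor $10^{12}$ and $k$ is unconstrained a priori. Split into two cases.
\begin{itemize}
 \item If $|\xi|\ge1$, the bound is trivial once $c'\le1$.
 \item If $|\xi|<1$, then $|N|\le|M|\sqrt2+|K|\sqrt3+1\le 6(|m|+|n|)+1$. Each conjugate then satisfies $|\xi_{\pm,\pm}|\le|N|+\sqrt2|M|+\sqrt3|K|\le 12(|m|+|n|)+1\le 13(|m|+|n|)$.
\end{itemize}
In the second case the product of the three remaining conjugates is at most $13^3(|m|+|n|)^3$. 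Therefore $|\xi|\ge 13^{-3}(|m|+|n|)^{-3}$ in all cases.

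Dividing by $3\cdot10^{12}$ gives \eqref{eq:dio} with $c=(3\cdot10^{12}\cdot13^3)^{-1}$. The exponent $3$ is exactly the number of nontrivial conjugates, that is, $[\Q(\sqrt2,\sqrt3):\Q]-1$. The only potential obstacle is the a priori control of $N$, and the case split above removes it.
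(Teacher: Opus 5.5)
Your proof is correct and follows essentially the same route as the paper. Like the paper, you clear denominators to get the nonzero algebraic integer $3\cdot10^{12}(m\alpha+n\beta-k)\in\Z[\sqrt2,\sqrt3]$ and use $|N_{\Q(\sqrt2,\sqrt3)/\Q}(\xi)|\ge1$ together with a linear bound on the three other conjugates. The only difference is how you control the rational part: the paper uses $|\ell|=O(|m|+|n|)$ for the nearest integer, with constants depending on $10^{12}$, whereas your case split on $|\xi|<1$ produces the explicit constant $c=(3\cdot10^{12}\cdot13^3)^{-1}$.
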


\begin{proof}
Write $\varepsilon=10^{-12}=1/b$ with $b=10^{12}$.  Let $\ell$ be a nearest
integer to $m\alpha+n\beta$ and set $B=|m|+|n|\ge1$.  In the biquadratic field
$K=\Q(\sqrt2,\sqrt3)$, the number
\begin{equation}\label{eq:zeta}
 \zeta:=3b(m\alpha+n\beta-\ell)
 =b(m+n-3\ell)+3m\sqrt2+3n\sqrt3
\end{equation}
is an algebraic integer.  It is nonzero because
$1,\sqrt2,\sqrt3$ are linearly independent over $\Q$.  Therefore
$|N_{K/\Q}(\zeta)|\ge1$.

Since $\ell$ is a nearest integer to $m\alpha+n\beta$, one has
$|\ell|=O(B)$.  Under each of the other three real embeddings of $K$, the
conjugate of \eqref{eq:zeta} is therefore $O(B)$.  Taking the norm gives
\[
 1\le |N_{K/\Q}(\zeta)|\le |\zeta|\,C B^3
\]
for a constant $C$ depending only on the fixed parameter.  Division by $3b$
yields \eqref{eq:dio}.
\end{proof}

Because $q:\T^2\to\C^*$ has zero winding on both fundamental cycles, it has a
smooth logarithm:
\begin{equation}\label{eq:logq}
 q=e^\phi,
 \qquad
 \phi\in C^\infty(\T^2;\C).
\end{equation}
Let
\[
 \mu=\int_{\T^2}\phi(z)\,dz.
\]
For $k=(k_1,k_2)\in\Z^2\setminus\{0\}$ define
\begin{equation}\label{eq:gfourier}
 \widehat g(k)
 =\frac{\widehat\phi(k)}{1-e^{-2\pi i k\cdot\tau}},
 \qquad
 \widehat g(0)=0.
\end{equation}
The elementary estimate
$|1-e^{-2\pi it}|\ge4\|t\|_{\R/\Z}$ and
Lemma~\ref{lem:dio} show that the denominator in \eqref{eq:gfourier} has only
polynomially growing inverse.  Since the Fourier coefficients of $\phi$
decay faster than any power, so do those of $g$.  Thus
$g\in C^\infty(\T^2)$ and
\begin{equation}\label{eq:cohomology}
 g(z)-g(T_\tau z)=\phi(z)-\mu.
\end{equation}
Putting
\begin{equation}\label{eq:F}
 F=e^g v_\tau
\end{equation}
and using \eqref{eq:q}, \eqref{eq:logq}, and \eqref{eq:cohomology}, we obtain
\begin{equation}\label{eq:flat}
 C_\tau(z)F(T_\tau z)=e^\mu F(z).
\end{equation}
Thus the scalar multiplier of the invariant line has been flattened to the
constant $\lambda=e^\mu\ne0$.

\section{Unfolding the Zak section}\label{sec:inverse}

Write $F=(F_1,F_2)^T$.  Reverse the folding in \eqref{eq:vector-zak} by
setting, for one frequency period,
\begin{equation}\label{eq:unfold}
 \widetilde F(x,\nu)=
 \begin{cases}
  \sqrt2\,F_1(x,2\nu),&0\le\nu<1/2,\\
  \sqrt2\,F_2(x,2\nu-1),&1/2\le\nu\le1.
 \end{cases}
\end{equation}
The $S_2=X_0$ sewing in \eqref{eq:seams} identifies the two pieces, including
all frequency derivatives, at $\nu=1/2$.  The $S_1$ sewing gives
\begin{equation}\label{eq:scalar-seams-back}
 \widetilde F(x+1,\nu)=e^{2\pi i\nu}\widetilde F(x,\nu),
 \qquad
 \widetilde F(x,\nu+1)=\widetilde F(x,\nu).
\end{equation}
Hence $\widetilde F$ is a smooth scalar Zak section.

For $x\in[0,1)$ and $k\in\Z$, define
\begin{equation}\label{eq:inverse-zak}
 f(x-k)=\int_0^1\widetilde F(x,\nu)e^{-2\pi i k\nu}\,d\nu.
\end{equation}
The first identity in \eqref{eq:scalar-seams-back} makes the definitions on
adjacent unit intervals agree, together with all $x$-derivatives.  For any
nonnegative integers $M,j$, differentiate \eqref{eq:inverse-zak} $j$ times in
$x$ and integrate by parts $M$ times in $\nu$.  The second identity in
\eqref{eq:scalar-seams-back} cancels all boundary terms, giving
\[
 |k|^M|f^{(j)}(x-k)|\le C_{M,j}
\]
uniformly for $x\in[0,1)$.  These are exactly the Schwartz seminorm bounds,
so
\[
 f\in\cS(\R).
\]
The vector Zak transform is unitary and $F$ is nowhere zero, hence $f\ne0$.
Finally, the covariance formulas \eqref{eq:Lmn} and \eqref{eq:Ctau} turn
\eqref{eq:flat} into \eqref{eq:main-eigen}.  This proves
Theorem~\ref{thm:main}.

\begin{proof}[Proof of Corollary~\ref{cor:boundary}]
Theorem~1 of Heil--Ramanathan--Topiwala gives linear independence for every
configuration of at most three distinct points
\cite[pp.~2791--2792]{HRT96}, so $N_*\ge4$.  Theorem~\ref{thm:main} gives a
four-point Schwartz example, so $N_*\le4$.  The same argument proves the
Schwartz-space statement.
\end{proof}

\begin{remark}[What the construction isolates]\label{rem:meaning}
The matrix field $A$ is uniformly invertible by \eqref{eq:detA}.  The
four-point dependence is therefore not explained by a singularity of the
three-term lattice polynomial.  What changes at covolume $1/2$ is the rank of
the natural Zak model: the scalar obstruction is replaced by a rank-two
bundle cocycle, and dependence is encoded by a topologically trivial invariant
line whose scalar dynamics can be cohomologically flattened.  The rational
three-cycle is not itself a counterexample; it is a finite model from which
the invariant geometry can be read and then transported to an irrational
translation.  This separation between geometric construction and arithmetic flattening
suggests a possible route to other subcritical configurations.
\end{remark}

\appendix
\section{The finite certificate}\label{app:certificate}

Only Proposition~\ref{prop:certificate} uses computer-assisted inequalities.
The calculation is a rigorous interval computation, not a sampled numerical
experiment.  It evaluates the explicit formulas
\eqref{eq:A}, \eqref{eq:P}, \eqref{eq:chi}, and \eqref{eq:hframe} using
Arb/Acb midpoint-radius arithmetic with outward error bounds; see
\cite{Johansson17} for the underlying arithmetic model.

The square $[0,1]^2$ is covered initially by a $128\times128$ rational grid.
A box is accepted only when interval evaluation proves simultaneously
\begin{align}\label{eq:app-cert}
 |k_{11}|-\frac14|k_{12}|&>0,\notag\\
 \frac{|k_{21}|+\frac14|k_{22}|}
 {|k_{11}|-\frac14|k_{12}|}&<\frac14,\\
 \frac{|k_{11}k_{22}-k_{12}k_{21}|}
 {( |k_{11}|-\frac14|k_{12}|)^2}&<1.\notag
\end{align}
A failing box is split dyadically, with at most four refinement levels.  At
160-bit precision all boxes are certified.  The resulting global outward
bounds are exactly those recorded in \eqref{eq:cert-numbers}; the computation
uses $1{,}043{,}952$ evaluated boxes and $787{,}060$ certified leaf boxes.
No pointwise floating-point sample enters the proof.

The Laurent identities \eqref{eq:trP}--\eqref{eq:detP}, the noncommuting seam
\eqref{eq:holonomy}, and the restoration of the full return
\eqref{eq:R} are algebraic identities independent of the interval output.
They may be checked exactly over $\Q(r)[u^{\pm1},v^{\pm1}]$ with
$r^2-r+1=0$.  As a negative control on the bundle identification, replacing
the required $X_0Z_0$ holonomy by the identity fails the graph inequalities.
The certificate therefore tests the actual sewn return rather than a matrix
living only on a fundamental square.

\end{document}